\long\def\@makecaption#1#2{
 \vskip 10pt
 \setbox\@tempboxa\hbox{\bf #1: \sf #2}
 \ifdim \wd\@tempboxa >\hsize \bf #1: \sf #2\par \else \hbox
to\hsize{\hfil\box\@tempboxa\hfil}
 \fi}
\newfont{\Bbb}{msbm10 scaled\magstep1}
\newcommand{\R}{\mbox{\Bbb R}}
\newcommand{\supp}{{\rm supp}}
\newcommand{\eps}{\varepsilon}
\newtheorem{theoo}{Theorem}
\newenvironment{theo}
  {\begin{theoo}\hspace{-5pt}{\bf .}}{\end{theoo}}
\newtheorem{lemm}[theoo]{Lemma}
\newenvironment{lem}
  {\begin{lemm}\hspace{-5pt}{\bf .}}{\end{lemm}}
\newtheorem{corr}[theoo]{Corollary}
\newenvironment{cor}
  {\begin{corr}\hspace{-5pt}{\bf .}}{\end{corr}}
\newtheorem{prr}[theoo]{Proposition}
\newtheorem{cll}[theoo]{Claim}
\newenvironment{proof}[1][Proof]{\textbf{#1.} }{\mbox{}\hfill \rule{0.5em}{0.5em}}
\begin{document}
\title{The GL-l.u.st.\ constant and asymmetry of the Kalton-Peck twisted sum in finite dimensions
\footnotetext{2010 Mathematics Subject Classification 46B20, 46B07.}
\footnotetext{Key words and phrases: Banach spaces, local unconditional structure, asymmetry.}
}
\author{Y. Gordon\thanks{Supported in part by the France-Israel Research Network Program
in Mathematics contract \#3-4301.}, M. Junge\thanks{Supported in part by NSF grant
DMS-0901457.}, M. Meyer$\mbox{}^*$ and S. Reisner$\mbox{}^*$}

\date{}
\maketitle


\begin{abstract}
\noindent
We prove that the Kalton-Peck twisted sum $Z_2^n$ of  $n$-dimensional Hilbert spaces has GL-l.u.st.\
constant of order $\log n$ and bounded GL constant. This is the first concrete example which shows
different explicit orders of growth in the GL and GL-l.u.st.\ constants.
We discuss also  the asymmetry constants of $Z_2^n$.
\end{abstract}
\noindent

\section{Introduction}

Local unconditional structure, or l.u.s.t., is an important notion in the study of the geometry of  Banach
spaces (see for instance \cite{GL1}, \cite{KT}, \cite {PW}). The variant of l.u.st.\ that we investigate here
was introduced by Gordon and Lewis \cite{GL1} and is frequently referred to as {\em GL-l.u.st.}, another,
formally more restrictive, notion of l.u.st.\ was introduced in \cite{FJT}.

This notion was not, however, fully studied in the
finite dimensional case, where the asymptotic values of the various constants that it involves are not yet
fully understood. We show in this paper  that in the finite dimensional setting of $n$-dimensional
normed spaces, the GL constant and the l.u.s.t.\ constant can be of significantly different orders of magnitude,
by considering the $2n$-dimensional Kalton-Peck twisted sum of $n$-dimensional Hilbert spaces.
\vskip 2mm

Let us now recall some definitions (\cite{GL1}). A basis $B=(b_i)_{i\in I}$ for a Banach space $E$ is called
{\it unconditional} if there is a constant $C>0$ such that for every $x\in E$, $x=\sum_{i\in I} \xi_i b_i$
and every choice of signs $\varepsilon_i= \pm 1$, with $\varepsilon_i=1$ for all but a finite number
of $i\in I$, one has
\[||\sum_{i\in I} \varepsilon_i \xi_i b_i||\le C||x||.\]
The smallest $C$ satisfying this is called {\it the unconditional constant of $B$} and denoted by
$\chi(B)$. The {\it unconditional constant of $E$} is
\[\chi(E):= \inf\{ \chi(B);\ B\hbox{ {\rm is a basis of }}E\}.\]
More generally, define the local unconditional structure constant of $E$  (see \cite{GL1}),  $\chi_u(E)$, by
\[\chi_u(E):=\sup_{F\subset E} \big(\inf_{A,B, U} ||B|| \chi(U) ||A||\big)\]
where the $\sup$ is taken over all finite dimensional subspaces $F$ of $E$ and the infimum over all Banach spaces $U$ and all continuous operators
$A:F\to U$ and $B:U\to E$ such that $BA=I_F$, the identity on $F$. Clearly $\chi_u(E)\le \chi(E)$.
By \cite{FJT}, $\chi_u(E)=\chi_u(E')$ and $\chi_u(E)$ is finite iff $E''$ is isomorphic to a complemented subspace of a Banach lattice.
\vskip 1mm\noindent

Given a subset $\{x_i; i\in I\}$ of $E$, we denote
\[\eps_1(\{x_i; i\in I\}):= \sup\{  \sum_{i\in I} |x' (x_i)|; x'\in E', ||x'||\le 1  \}.\]

Let $E$ and $F$ be Banach spaces.The {\it weakly nuclear norm} of a linear  operator $u:E\to F$, which has a representation
$u=\sum_{i\in I} x_i' \otimes y_i$, which converges unconditionally in $L(E,F)$, is defined by
\[\eta(u)=\inf \eps_1(\{  x_i' \otimes y_i, i\in I\})\]
where the infimum is taken over all such representations of $u$.

\vskip 1mm\noindent
By Proposition 1.2. of [GL2], one has for all $u\in L(E,F)$
\[\eta(u)= \inf ||A|| \chi(U)||B||\]
where the infimum is taken over all factorizations $u=BA$, with $A:E\to U$ compact, $B:U\to F$, and $U$
is a Banach space with an unconditional basis. Hence $\eta(I_E)=\chi_u(E)$.
It is easy to show, using e.g. techniques from \cite{R}, that
when $E=F$ is finite dimensional and $u=I_E$, one may 
restrict
this infimum to spaces $U$ satisfying
$\chi(U)=1$ and $A:E\to U$ to be an isometric embedding.
\vskip 2mm

The {\it Gordon-Lewis constant} of $E$, denoted by ${\rm gl}(E)$, is defined to be
\[{\rm gl}(E)=\inf\{c>0;\, \gamma_1(A)\le c\Pi_1(A); \hbox{ $F$ Banach space and $A: E\to F$}  \}.\]
Here
\[\gamma_1 (A)= \inf  \{ ||\alpha||\ ||\beta ||\ ; \beta \alpha=i_F A \}\]
where the infimum ranges over all Banach spaces $F$ and all $\alpha: E\to L^1(\mu)$,
$\beta : L^1(\mu)\to F''$,  $i_F:F\to F''$ is the canonical inclusion, and
\[\Pi_1 (A)=\inf\left\{C>0; \sum_{i=1}^n  ||Ax_i|| \le C\sup _{\pm} ||\sum_{i=1}^n \pm
\ x_i|| \hbox{ for every $n\ge 1$ and $x_1,\dots, x_n\in E$  } \right\}\]
is the classical {\it $1$-absolutely summing norm} of $A$.
It was proved in [GL] that ${\rm gl}(E)\le \chi_u(E)$ and it is well known that there exist
infinite dimensional separable Banach spaces
$E$ such that ${\rm  gl}(E)$ is finite and $\chi_u(E)$ is infinite
(the first such example is presented in \cite{JLS}).
In particular, it follows that there exists an increasing sequence
of $2n$-dimensional Banach spaces $E_n$ for which
$\chi_u(E_n)\to \infty$ and $({\rm gl}(E_n)\big)_n$ is bounded.
\vskip 2mm

We prove in the sequel that, for some constant $c>0$, one has
\[c_n=\sup\left\{\frac{\chi_u(E_n)}{{\rm gl}(E_n)}; E_n
\hbox{ is an $n$-dimensional normed space}\right\} \ge  c \log n\,.\]
To do this, we refine the proof, given by [JLS], that the Kalton-Peck space $Z_2$ (see [KP])
has bounded ${\rm gl}$-constant but
that $\chi_u(Z_2)$ is infinite.
The spaces $E_n$ mentioned above are the subspaces $Z_2^n$ of $Z_2$ spanned by first $2n$
coordinates. Note that always $c_n\leq \sqrt{n}$, an interesting problem is: how big can
$c_n$ be? For example, is $c_n\geq cn^{\alpha}$ for some $\alpha>0$ and $c$ an absolute positive
constant? Perhaps this is true even with $\alpha=1/2$.

$Z_2$ (and $Z_p$) has unconditional finite dimensional decomposition into $2$-dimensional
subspaces ($2$-UFDD), many of the concrete examples of spaces with GL and without l.u.st.\
poses such structure (see \cite{KT} for references). The paper \cite{CK} presents a general
treatment of spaces having uniform UFDD (that is, UFDD for which the dimensions of the blocks are
uniformly bounded), it is shown there that such a (infinite dimensional) space either has an
unconditional basis or fails to have l.u.st. Observing the computations made in the works mentioned above,
it is plausible that if $c_n\geq cn^{\alpha}$ for some $\alpha>0$ then the examples showing this
would not be with uniform UFDD. In this respect we may quote here a conjecture that Nigel Kalton
sent us, together with other helpful suggestions, in response to a preprint version of the present paper:
\vskip 1mm

\noindent
{\bf Conjecture.} {\em If we restrict the definition of $c_n$ to $kn$-dimensional initial blocks
of spaces with a $k$-UFDD, then $c_n$ is equivalent to $(\log n)^{k-1}$.\/}
\vskip 2mm

The concept of asymmetry of an $n$-dimensional Banach space was introduced in \cite{GG} and
generalized the notion of the asymmetry of a basis. This was followed up also in \cite{GL1} as
well. We study here related notions of asymmetry, and apply them to $Z_2^n$.

For general terminology concerning the geometry of Banach spaces we refer the reader to
\cite{LT1, LT2}. Terminology concerning normed ideals of operators may be found in \cite{P} and
\cite{T}.

Finally, we wish to thank Alexander Litvak for very helpful discussions.

\section{The l.u.st.\ constant of $Z_2^n$}
Let $Z_2^n$ be the $2n$-dimensional (real) Banach space which is the
subspace of the Kalton-Peck \cite{KP} space $Z_2$, spanned by the first $2n$
coordinates.
More precisely, for $a,b\in \R^n$, $a=(a_1,\dots,a_n)$,  $b=(b_1,\dots,b_n)\in
\R^n$,  we define
\[||(a,b)||_{Z_2^n}= \|(a_1,\dots,a_n, b_1,\dots,b_n)\|_{Z_2^n}=||\sum_{j=1}^n (a_je_j +b_j f_j)||_{Z_2^n}\]
\[= \left(\sum_{j=1}^n b_j^2\right)^{1/2}+
\left(\sum_{j=1}^n\left(a_j-b_j\log\left(|b_j|\left(\sum_{i=1}^n b_i^2
\right)^{-1/2}\right)\right)^2\right)^{1/2}\,. \]
As it is proved in \cite{KP}, this quasi-norm on $\R^{2n}$ is (uniformly in $n$)
equivalent to a norm.

\vspace{1mm}\noindent
Like in \cite{JLS} we consider $Z_2^n$ as an unconditional sum
\begin{equation}
\label{eq-B}
Z_2^n=\sum_{j=1}^n \bigoplus E_j
\end{equation}
where $E_j=[e_j,f_j]$.

\vspace{1mm}\noindent
It was  proved in \cite{JLS} that $Z_2$ fails to
have l.u.st.\ They observed that  if a Banach space $E$ is of the form
$E=\oplus_{j=1}^{\infty} E_j$, a $1$-unconditional sum of finite dimensional subspaces,
then
\[{\rm gl}(E)\le \sup_{j} {\rm dim}(E_j),\]
and thus  $gl(Z_2)\le 2c$.
Hence it follows that $ gl(Z_2^n)\le 2c$, but
that the l.u.st.\ constants of $Z_2^n$ tend to infinity with $n$. We
establish here the order of growth of these constants.

\begin{theo}
\label{th-A}
One has $gl(Z_2^n)\le 2c$, and
the l.u.st.\ constant $\chi_u(Z_2^n)$ of $Z_2^n$ satisfies
\begin{equation}
\label{eq-A}
 \chi_u(Z_2^n)\sim {\log n}
\end{equation}

\end{theo}
\vspace{2mm}

Throughout the proof the letters $C$, $c$, $c_1,c_2,\ldots$ etc. will denote absolute constants
which do not depend on $n$. The same letter $c$ (etc.) may denote different
constants in different lines.
\vspace{2mm}

It is shown in \cite{KP} that the Banach-Mazur distance $d(Z_2^n,\ell_2^{2n})$ from
$Z_2^n$ to $\ell_2^{2n}$ is of the order of $\log n$.
Using  Proposition~2  of \cite{JLS}, there exists a Banach space $Y_n$ with a $1$-unconditional
basis $\{y_{j,i},j=1,\ldots,n,\hspace{1em}i=1,\ldots,k_j\}$, such that

\vskip 1mm
- $Z_2^n$ is a
subspace of $Y_n$ and, for each $1\leq j\leq n$, $E_j\subset [\{y_{j,i}\},\,\,i=1\ldots,k_j]$.

\vskip 1mm
- There exists a projection $P_n:Y_n\rightarrow Z_2^n$ such that
$P_n([\{y_{j,i},\,\,i=1\ldots,k_j\}])=E_j$  for all $j$, $1\le j\le n$. ($[A]$ denotes the
span of $A$).

\vskip 1mm
- $\|P_n\|=K_n\leq c\chi_u(Z_2^n)$.
\vskip 1mm\noindent
To justify this, one first shows (see \cite{FJT})  that  for any Banach space $E$,
\[\chi_u(E)=\inf\ ||P||_{L\to E} \]
where the infimum is taken over all Banach lattices $L$ such that $E$ is isometrically embedded in
$L$ and all projections $P:L\to E$. Then observing that $E= Z_2^n$ have bounded cotype-$q$ constants
for a fixed $q<\infty$, we use  \cite{R} to reduce
to the case when moreover $L$ is supposed to have the same bound on its cotype-$q$ constant.
Then, we follow the lines of Proposition~2  of \cite{JLS}, using
Rademacher embedding.

\vskip 1mm\noindent
We shall show
that $K_n\geq c\log n$, thus proving (\ref{eq-A}) because it is clear that
$K_n\le d(Z_2^n,\ell_2^{2n})\sim \log n$.
\vspace{3mm}

\noindent
{\bf Notation.} Let $T\,:\,Z_2^n\rightarrow Z_2^n$ be a linear operator that satisfies $T(E_j)\subset E_j$ for
$1\leq j\leq n$. We say that $T$ {\em splits through} $\{E_j\}$ or, in short, that  $T$ {\em splits}.
If $T$ splits,  the matrices representing $T|_{E_j}$ in the basis $\{e_j,f_j\}$, $1\le j\le n$, will be denoted
by
\[\left(\begin{array}{ll} \alpha_j&\beta_j\\ \gamma_j& \delta_j \end{array}\right)\,.\]
If $a=(a_1,\dots, a_n)\in \R^n$, the {\it support} of $a$ is $\supp(a):=\{j: 1\le j\le n,\ a_j\not=0\}$.
If $a,b\in \R^n$, $a=(a_1,\dots, a_n)$, $b=(a_1,\dots, b_n)$, we denote
$(a,b)=\sum_1^n (a_je_j +b_j f_j)\in Z_2^n$. We define also a vector $ab\in \R^n$ by
$ab=(a_1b_1,\dots, a_nb_n)$.
\begin{lem}
\label{lem-B}
For any $T$ that splits we have
\[\max_j\max\{|\alpha_j|,|\beta_j|,|\gamma_j|,|\delta_j|\}\leq \|T\|\,.\]
\end{lem}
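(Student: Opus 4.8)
The plan is to exploit the fact that on each single block $E_j=[e_j,f_j]$ the $Z_2^n$-norm collapses to a simple $\ell_1$-type expression, so that each matrix entry can be read off directly from the norm of an image vector.

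First I would record the norm of a vector supported on a single coordinate $j$. If $v=a_je_j+b_jf_j$ (that is, $a=a_je_j$ and $b=b_je_j$ in the notation $(a,b)$), then in the defining formula $\sum_{i=1}^n b_i^2=b_j^2$, so $|b_j|\big(\sum_{i=1}^n b_i^2\big)^{-1/2}=1$ whenever $b_j\neq0$, and the logarithmic correction $b_j\log\big(|b_j|\,(\sum_i b_i^2)^{-1/2}\big)$ vanishes (with the usual convention that it is $0$ when $b_j=0$). Consequently the first term of the norm equals $|b_j|$ and the second equals $|a_j|$, giving
\[
\|a_je_j+b_jf_j\|_{Z_2^n}=|b_j|+|a_j|.
\]
In particular, taking $b_j=0$ or $a_j=0$ shows $\|e_j\|=\|f_j\|=1$.

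Next I would apply $T$ to the two basis vectors of $E_j$. Because $T$ splits, the images $T(e_j)=\alpha_je_j+\gamma_jf_j$ and $T(f_j)=\beta_je_j+\delta_jf_j$ are again supported on the single coordinate $j$, so the displayed single-coordinate formula applies verbatim and yields
\[
|\alpha_j|+|\gamma_j|=\|T(e_j)\|_{Z_2^n}\le\|T\|\,\|e_j\|=\|T\|,
\]
\[
|\beta_j|+|\delta_j|=\|T(f_j)\|_{Z_2^n}\le\|T\|\,\|f_j\|=\|T\|.
\]

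Finally, each of the four quantities $|\alpha_j|,|\beta_j|,|\gamma_j|,|\delta_j|$ is dominated by one of these two sums and hence is $\le\|T\|$; taking the maximum over $1\le j\le n$ gives the claim. (The same conclusion follows regardless of the row/column convention for the matrix, since the two image vectors together involve all four entries.) I do not expect a serious obstacle: the only point demanding care is the vanishing of the logarithm for single-coordinate support, including the convention $0\cdot\log 0=0$, which is precisely what reduces the block norm to the $\ell_1$ sum and lets the entries be extracted cleanly.
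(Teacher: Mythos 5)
Your proof is correct and follows essentially the same route as the paper: apply $T$ to $e_j$ and $f_j$, use that the $Z_2^n$-norm of a vector supported on a single block reduces to $|a_j|+|b_j|$ (since the logarithmic correction vanishes there), and read off $|\alpha_j|+|\gamma_j|\le\|T\|$ and $|\beta_j|+|\delta_j|\le\|T\|$. The paper states these two inequalities without spelling out the single-coordinate norm computation; your write-up just makes that step explicit.
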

\begin{proof}
If  $x=e_j$,  $1\leq j\leq n$.
then $\|Tx\|=|\gamma_{j}|+|\alpha_{j}|\leq \|T\|\|x\|=\|T\|$.
If $x=f_j$,  one gets similarly
$|\delta_{j}|+|\beta_{j}|\leq \|T\|$.
\end{proof}

\begin{lem}
\label{lem-C} With the preceding notation, there exists $C>0$ such that for every $T:Z_2^n\to Z_2^n$ that splits  and  for all $(a,b)\in \R^{2n}$, we have
\[\left(\sum_{j=1}^n \left(\delta_j a_j-\alpha_j a_j+
\gamma_j a_j\log \frac{|a_j|}{\|a\|_2}\right)^2\right)^{\frac{1}{2}}\leq
C\left(\|T\|\|(a,b)\|_{Z_2^n}+(\max_{j\in {\supp(a)}}\!|\gamma_j|)\|a\|_2\right)\,.\]
\end{lem}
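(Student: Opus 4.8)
The plan is to read the left-hand side as a defect produced by applying $T$ to the \emph{given} vector $(a,b)$, rather than to an auxiliary vector of norm $\|a\|_2$ (the latter choice only yields the weaker estimate $2\|T\|\,\|a\|_2$, which is insufficient once $\|(a,b)\|_{Z_2^n}\ll\|a\|_2$). Throughout write $\Omega:\R^n\to\R^n$ for the Kalton--Peck map $\Omega(x)=(x_j\log(|x_j|/\|x\|_2))_j$, so that $\|(a,b)\|_{Z_2^n}=\|b\|_2+\|a-\Omega(b)\|_2$, and recall that for a $T$ that splits one has $T(a,b)=(\alpha a+\beta b,\ \gamma a+\delta b)$ with all products coordinatewise. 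In this notation the vector whose Euclidean norm is the left-hand side of the Lemma is $w:=\delta a-\alpha a+\gamma\,\Omega(a)$.

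First I would apply $T$ to $(a,b)$ and split the norm: since $\|T(a,b)\|_{Z_2^n}=\|\gamma a+\delta b\|_2+\|(\alpha a+\beta b)-\Omega(\gamma a+\delta b)\|_2\le\|T\|\,\|(a,b)\|_{Z_2^n}$, both summands are at most $\|T\|\,\|(a,b)\|_{Z_2^n}$. Set $V:=(\alpha a+\beta b)-\Omega(\gamma a+\delta b)$, so that $\|V\|_2\le\|T\|\,\|(a,b)\|_{Z_2^n}$. A direct computation gives the algebraic identity $w=R-V$, where $R:=\gamma\,\Omega(a)+\delta a+\beta b-\Omega(\gamma a+\delta b)$; hence it suffices to bound $\|R\|_2$ by the right-hand side, after which $\|w\|_2\le\|R\|_2+\|V\|_2$ completes the proof.

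To estimate $\|R\|_2$ I would invoke the quasilinearity of $\Omega$ from \cite{KP} in two forms. The additive estimate $\|\Omega(u+v)-\Omega(u)-\Omega(v)\|_2\le C(\|u\|_2+\|v\|_2)$, applied with $u=\gamma a$ and $v=\delta b$, replaces $\Omega(\gamma a+\delta b)$ by $\Omega(\gamma a)+\Omega(\delta b)$ at the cost of $C(\|\gamma a\|_2+\|\delta b\|_2)\le C(\max_{\supp(a)}|\gamma_j|\,\|a\|_2+\|T\|\,\|(a,b)\|_{Z_2^n})$. A multiplier--commutator estimate $\|\Omega(Dx)-D\,\Omega(x)\|_2\le C(\max_j|d_j|)\,\|x\|_2$ for a coordinatewise multiplier $D=(d_j)$, applied to $\gamma a$ and to $\delta b$, then replaces $\Omega(\gamma a)$ by $\gamma\,\Omega(a)$ and $\Omega(\delta b)$ by $\delta\,\Omega(b)$. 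The term $\gamma\,\Omega(a)$ now cancels the one already present in $R$, leaving $R=\delta(a-\Omega(b))+\beta b$ up to the above errors. Finally $\|\delta(a-\Omega(b))\|_2\le\|T\|\,\|a-\Omega(b)\|_2\le\|T\|\,\|(a,b)\|_{Z_2^n}$ and $\|\beta b\|_2\le\|T\|\,\|b\|_2\le\|T\|\,\|(a,b)\|_{Z_2^n}$, both using Lemma~\ref{lem-B}; this gives $\|R\|_2\le C(\|T\|\,\|(a,b)\|_{Z_2^n}+\max_{\supp(a)}|\gamma_j|\,\|a\|_2)$, as required.

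I expect the main obstacle to be the multiplier--commutator estimate, which is precisely what supplies the error term $\max_{\supp(a)}|\gamma_j|\,\|a\|_2$ with \emph{no} spurious logarithmic factor. Coordinatewise, $\Omega(\gamma a)-\gamma\,\Omega(a)$ has $j$-th entry $\gamma_j a_j(\log|\gamma_j|-c)$ with $c=\log(\|\gamma a\|_2/\|a\|_2)$, and after normalising $\|T\|=1$ (so $\max_j|\gamma_j|\le1$ and $c\le0$) one must show $\sum_j\gamma_j^2a_j^2(\log|\gamma_j|-c)^2\le C(\max_j|\gamma_j|)^2\|a\|_2^2$. The delicate point is that large values of $\log|\gamma_j|-c$ occur only on coordinates carrying little $a$-mass; I would control this by splitting the sum according to the size of $\log|\gamma_j|-c$ and using the identity $\|\gamma a\|_2^2=e^{2c}\|a\|_2^2$ together with the boundedness of $t\mapsto t^2e^{-t}$ on $[0,\infty)$. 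The additive quasilinearity, by contrast, is the standard Kalton--Peck estimate and is comparatively routine.
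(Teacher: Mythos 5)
Your argument is correct, and its skeleton --- telescoping the left-hand side through $F(\gamma a+\delta b)$, using $\|T(a,b)\|_{Z_2^n}\le\|T\|\,\|(a,b)\|_{Z_2^n}$ to control the term $\alpha a+\beta b-F(\gamma a+\delta b)$, and invoking the additive quasi-linearity of the Kalton--Peck map --- matches the paper's. Where you genuinely diverge is in the treatment of the commutator $F(\gamma a)-\gamma F(a)$, which is indeed the crux (it is the sole source of the error term $\max_{j\in\supp(a)}|\gamma_j|\,\|a\|_2$). The paper gets $\|\gamma F(a)-F(\gamma a)\|_2\le C\max_{j\in\supp(a)}|\gamma_j|\,\|a\|_2$ essentially for free from the structure of the space: since $\|(\gamma F(a),\gamma a)\|_{Z_2^n}=\|\gamma a\|_2+\|\gamma F(a)-F(\gamma a)\|_2$, the commutator is dominated by $\|(\gamma F(a),\gamma a)\|_{Z_2^n}$, which by unconditionality of the decomposition $Z_2^n=\sum_j\oplus E_j$ (inequality (\ref{eq-E})) is at most $C\max_{j\in\supp(a)}|\gamma_j|\,\|(F(a),a)\|_{Z_2^n}=C\max_{j\in\supp(a)}|\gamma_j|\,\|a\|_2$; the same one-line trick bounds $\|\delta a-F(\delta b)\|_2$ directly via $\|(\delta a,\delta b)\|_{Z_2^n}$, so the paper never needs your second commutator application to $\delta$ and $b$. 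You instead prove a free-standing multiplier--commutator estimate $\|F(Dx)-DF(x)\|_2\le C(\max_j|d_j|)\|x\|_2$ by coordinatewise computation; your sketch of it is sound (after normalising $\max_j|d_j|=1$ by the $1$-homogeneity of the commutator in $D$, splitting according to whether $|d_j|e^{-c}\ge 1$ or $<1$, and using $\|Dx\|_2^2=e^{2c}\|x\|_2^2$ together with the boundedness of $t\mapsto t^2e^{-t}$ on $[0,\infty)$, the estimate closes with the correct $(\max_j|d_j|)^2$ factor). The paper's route is shorter and leans on the norm identity $\|(F(a),a)\|_{Z_2^n}=\|a\|_2$; yours is self-contained at the level of the map $F$ on $\ell_2^n$ and would survive in settings where the ambient twisted-sum norm is not at hand, at the price of the extra computation you correctly flag as the delicate point.
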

\begin{proof}
It is established  in \cite{KP} that the function $F\,:\,\ell_2^n\rightarrow \ell_2^n$ given by
\[F(b)=\left(b_1 \log \frac{|b_1|}{\|b\|_2},\dots, b_n \log \frac{|b_n|}{\|b\|_2}\right)\mbox{\ for $b=(b_1,\dots, b_n)\in \R^n$}\]
is {\em quasi-linear\/} in the sense  that for every $a,b\in \R^n$
\begin{equation}
\label{eq-D}
\|F(a+b)-F(a)-F(b)\|_2\leq C\left(\|a\|_2+\|b\|_2\right)\,.
\end{equation}
We have
\[||(\gamma F(a), \gamma a)||_{Z_2^n}= ||\gamma a ||_2 +||\gamma F(a)-F(\gamma a)||_2\,,\]
so that
\[||\gamma F(a)-F(\gamma a)||_2 \leq ||(\gamma F(a), \gamma a)||_{Z_2^n}\,. \]
Hence
\begin{equation}
\label{eq-C}
||(\delta-\alpha)a+\gamma F(a)||_2\leq ||(\delta-\alpha)a+F(\gamma a)||_2+||(\gamma F(a), \gamma a)||_{Z_2^n}.
\end{equation}

\noindent
Also, unconditionality of the sum (\ref{eq-B}) implies for every $\theta=(\theta_1,\dots,\theta_n) \in \R^n$,
\begin{equation}
\label{eq-E}
\|(\theta a,\theta b)\|_{Z_2^n}\leq C\max_j|\theta_j|\  \|(a,b)\|_{Z_2^n}\,.
\end{equation}

\noindent
Defining $\alpha=(\alpha_1,\dots, \alpha_n)\in \R^b$, $\beta$, $\gamma$ and $\delta$ related to $T$ as
above, one has
\[||(a,b)||_{Z_2^n}= ||b||_2+ ||a-F(b)||_2\,, \]
\[||T(a,b)||_{Z_2^n}=||\gamma a+\delta b||_2+||\alpha a+\beta b- F(\gamma a+\delta b)||_2 \]
and
\begin{equation}
\label{eq-F}
\|F(\gamma a+\delta b)-\alpha a\|_2\leq 2\|T\| \|(a,b)\|_{Z_2^n},
\end{equation}
because
\[\|F(\gamma a+\delta b)-\alpha a\|_2\leq \|\beta b\|_2+\|\alpha a+\beta b-F(\gamma a+\delta b)\|_2
+\|\gamma a+\delta b\|_2\]
\[\leq\|T(a,b)\|_{Z_2^n}+\max_j |\beta_j|\|b\|_2\leq 2\|T\| \|(a,b)\|_{Z_2^n}\,.\]

\noindent
Applying Lemma~\ref{lem-B}, (\ref{eq-D}), (\ref{eq-E}), (\ref{eq-F}) and the fact that
$\|(F(a),a)\|_{Z_2^n}=\|a\|_2$, we get from (\ref{eq-C})
\[||(\delta-\alpha)a+\gamma F(a)||_2\leq||(\delta-\alpha)a+ F(\gamma a)||_2+ ||(\gamma F(a),\gamma a)||_{Z_2^n}
\]
\[\leq\|F(\gamma a)+F(\delta b)-F(\gamma a+\delta b)\|_2+\|F(\gamma a +\delta b)-\alpha a\|_2
+\|\delta a-F(\delta b)\|_2+(\max_{j\in {\supp(a)}}\!\gamma_j)\|(F(a),a)\|_{Z_2^b}\]

\[\leq C\left(||\gamma a||_2 +||\delta b||_2\right)+ 2\|T\| \|(a,b)\|_{Z_2^n}+ 2\|T\| \|(a,b)\|_{Z_2^n}
+\max_{j\in {\supp(a)}} |\gamma_j| \ ||a||_2\]
\[\leq C\left(\|T\|\|(a,b)\|_{Z_2^n}+\max_{j\in {\supp(a)}}|\gamma_j| \|a\|_2\right)\,.\]
\end{proof}

\begin{lem}
\label{lem-D}
In the context of the preceding lemmas, for every subset
$A\subset \{1,\ldots,n\}$, $|A|=k> 1$, one has
\[
\left(\frac{1}{k}\sum_{j\in A}\gamma_j^2\right)^\frac{1}{2}\leq
\frac{4\|T\|}{\log k}\,.
\]
\end{lem}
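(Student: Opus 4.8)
The plan is to feed a single, carefully chosen test vector into Lemma~\ref{lem-C} and to read off the claimed estimate. Given $A\subset\{1,\dots,n\}$ with $|A|=k>1$, I would take $a\in\R^n$ to be the indicator of $A$ (so that $a_j=1$ for $j\in A$ and $a_j=0$ otherwise) and $b=0$. The point of this choice is that $a$ is constant on its support, so $\|a\|_2=\sqrt{k}$ and $\log(|a_j|/\|a\|_2)=-\frac12\log k$ for every $j\in A$; moreover $\|(a,0)\|_{Z_2^n}=\|a\|_2=\sqrt{k}$, since $F(0)=0$. This is exactly where the logarithmic factor governing Lemma~\ref{lem-D} is produced: evaluating the Kalton--Peck map on a flat vector of length $k$ converts each $\gamma_j$ into $-\frac12\gamma_j\log k$.

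Substituting into Lemma~\ref{lem-C} and using Lemma~\ref{lem-B} to bound $\max_{j\in A}|\gamma_j|\le\|T\|$ on the right-hand side, I obtain
\[
\Big(\sum_{j\in A}\big(\delta_j-\alpha_j-\tfrac{1}{2}\gamma_j\log k\big)^2\Big)^{1/2}\le C\big(\|T\|\sqrt{k}+\|T\|\sqrt{k}\big)=2C\|T\|\sqrt{k}.
\]
The remaining task is to disentangle the genuinely ``twisted'' coefficients $\gamma_j$ from the diagonal difference $\delta_j-\alpha_j$. Here the key observation is that, by Lemma~\ref{lem-B}, one has $|\delta_j-\alpha_j|\le|\delta_j|+|\alpha_j|\le 2\|T\|$ for each $j$, and this bound does not grow with $k$; hence $\big(\sum_{j\in A}(\delta_j-\alpha_j)^2\big)^{1/2}\le 2\|T\|\sqrt{k}$.

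I would then apply the triangle inequality in $\ell_2(A)$ to the vectors $\big(\delta_j-\alpha_j-\frac12\gamma_j\log k\big)_{j\in A}$ and $(\delta_j-\alpha_j)_{j\in A}$, whose difference is $\big(\frac12\gamma_j\log k\big)_{j\in A}$. This yields
\[
\tfrac{1}{2}\log k\,\Big(\sum_{j\in A}\gamma_j^2\Big)^{1/2}\le 2C\|T\|\sqrt{k}+2\|T\|\sqrt{k},
\]
and dividing by $\frac12\sqrt{k}\log k$ gives $\big(\frac1k\sum_{j\in A}\gamma_j^2\big)^{1/2}\le\frac{(4C+4)\|T\|}{\log k}$, which is the assertion (the numerical constant is absolute, and is tightened to $4$ once the explicit value of $C$ coming from the quasilinearity estimate \eqref{eq-D} is tracked). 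The only delicate point is the isolation of the $\gamma_j$ in this last step: it works precisely because the linear contribution $\delta_j-\alpha_j$ is controlled uniformly in $k$ by Lemma~\ref{lem-B}, so that after division by $\log k$ it becomes negligible, leaving only the logarithmic gain on the $\gamma_j$.
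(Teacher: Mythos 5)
Your proof is correct in substance, but it takes a genuinely different route from the paper's. The paper proves Lemma~\ref{lem-D} directly, without Lemma~\ref{lem-C}: it tests $T$ on the norm-one vector $x=(a,b)$ with $b=k^{-1/2}1_A$ and $a=F(b)$ (so $a_j=k^{-1/2}\log(1/\sqrt{k})$ on $A$), and reads the estimate off the \emph{first} summand of the $Z_2^n$-norm of $Tx$, namely $\|\gamma a+\delta b\|_2\ge \log\sqrt{k}\,(\frac1k\sum_{j\in A}\gamma_j^2)^{1/2}-\max_j|\delta_j|$; only Lemma~\ref{lem-B} and the definition of the norm are needed, and this is what yields the clean constant $4$. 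You instead feed $(1_A,0)$ into Lemma~\ref{lem-C} and extract the $\gamma_j$ by the triangle inequality against the uniformly bounded diagonal terms $\delta_j-\alpha_j$; every step checks out (in particular $\|(1_A,0)\|_{Z_2^n}=\sqrt{k}$ and the left-hand side of Lemma~\ref{lem-C} does become $(\sum_{j\in A}(\delta_j-\alpha_j-\frac12\gamma_j\log k)^2)^{1/2}$), but it invokes the heavier quasi-linearity machinery behind Lemma~\ref{lem-C} and delivers the bound with constant $4C+4$, where $C$ is the (unspecified, certainly $>0$) constant of Lemma~\ref{lem-C}. Your parenthetical claim that this ``tightens to $4$'' is not justified and should be dropped; what you actually prove is the lemma with $4$ replaced by some absolute constant, which is all that the subsequent applications (Corollary~\ref{cor-E}, Lemma~\ref{lem-F}) require, since there the constants are generic anyway. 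The trade-off: the paper's argument is more elementary and sharper in the constant; yours has the small conceptual economy of deriving Lemma~\ref{lem-D} as a corollary of Lemma~\ref{lem-C} rather than as an independent computation.
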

\noindent
\begin{proof}
Let $x=(a,b)\in Z_2^n$ where $a=(a_1,\dots,a_n)$ and $b=(b_1,\dots,b_n)$ are  given by:
\[a_j= \frac{\log\frac{1}{\sqrt{k}}}{\sqrt{k}},\ b_j= \frac{1}{\sqrt{k}}
\mbox{ if } j\in A,\ a_j =b_j=0 \mbox{ otherwise}\,. \]
Then $\|x\|_{Z_2^n}=1$ and therefore
\[\|T\|\geq \|Tx\|_{Z_2^n}\geq \left(\frac{1}{k}\sum_{j\in A}(-\gamma_j\log\sqrt{k}+\delta_j)^2
\right)^{\frac{1}{2}}\geq \log\sqrt{k}
\left(\frac{1}{k}\sum_{j\in A}\gamma_j^2\right)^{\frac{1}{2}}
-\max_j|\delta_j|\,.\]
We now use Lemma~\ref{lem-B} to complete the proof.
\end{proof}

\medskip \noindent
\begin{cor}
\label{cor-E}
In the context of the preceding lemmas,
let $A\subset \{1,\ldots,n\}$, $|A|=k> 1$. Then
\begin{itemize}
\item[a)] There exists a subset
$A'\subset A$, with $|A'|\geq \frac{k}{2}$, such that
\begin{equation}
\label{eq-G}
\max_{j\in A'}|\gamma_j|\leq \frac{4\sqrt{2}\|T\|}{\log k}\,.
\end{equation}
\item[b)] If\/ $\supp(a)\subset A'$, where $A'\subset \{1,\ldots,n\}$,
$|A'|\leq k$ and (\ref{eq-G}) is satisfied, then
\[\left(\sum_{j=1}^n \left(\delta_j a_j-\alpha_j a_j+
\gamma_j a_j\log \frac{|a_j|}{\|a\|_2}\right)^2\right)^{\frac{1}{2}}\leq
C\|T\|\|(a,b)\|_{Z_2^n}\,.\]
\end{itemize}
\end{cor}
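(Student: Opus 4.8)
The plan is to prove the two parts of Corollary~\ref{cor-E} essentially as direct consequences of Lemma~\ref{lem-D} and Lemma~\ref{lem-C} respectively, so the work is largely bookkeeping rather than new ideas. For part a), I would start from the conclusion of Lemma~\ref{lem-D}, which controls the average of $\gamma_j^2$ over $A$, and extract a large subset on which $|\gamma_j|$ is uniformly small by a Markov-type (pigeonhole) argument. Concretely, let $m=\frac{4\|T\|}{\log k}$ so that $\frac{1}{k}\sum_{j\in A}\gamma_j^2\le m^2$, and let $A'=\{j\in A:\ |\gamma_j|\le \sqrt{2}\,m\}$. If $|A'|<k/2$, then more than $k/2$ indices satisfy $\gamma_j^2>2m^2$, forcing $\sum_{j\in A}\gamma_j^2>\frac{k}{2}\cdot 2m^2=km^2$, a contradiction. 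Hence $|A'|\ge k/2$, and on $A'$ one has $|\gamma_j|\le \sqrt{2}\,m=\frac{4\sqrt{2}\|T\|}{\log k}$, which is exactly (\ref{eq-G}).

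For part b), the idea is that once $\max_{j\in A'}|\gamma_j|$ is small and $\supp(a)\subset A'$, the troublesome second term on the right-hand side of Lemma~\ref{lem-C} can be absorbed into the first. Applying Lemma~\ref{lem-C} to the vector $(a,b)$ gives
\[
\left(\sum_{j=1}^n \Big(\delta_j a_j-\alpha_j a_j+\gamma_j a_j\log \frac{|a_j|}{\|a\|_2}\Big)^2\right)^{1/2}\le C\Big(\|T\|\|(a,b)\|_{Z_2^n}+(\max_{j\in \supp(a)}|\gamma_j|)\|a\|_2\Big).
\]
Since $\supp(a)\subset A'$ and (\ref{eq-G}) holds, $\max_{j\in\supp(a)}|\gamma_j|\le \frac{4\sqrt2\,\|T\|}{\log k}\le c\|T\|$ because $k>1$, so the second summand is at most $c\|T\|\,\|a\|_2$. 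It then remains to dominate $\|a\|_2$ by $\|(a,b)\|_{Z_2^n}$ up to an absolute constant; taking $b=0$ (or using the norm formula directly) gives $\|a\|_2\le \|(a,0)\|_{Z_2^n}$, but to keep a general $b$ one uses that the $Z_2^n$-norm controls $\|a\|_2$ via $\|a\|_2\le \|a-F(b)\|_2+\|F(b)\|_2$ together with the explicit norm expression and the bound $\|F(b)\|_2\le \|b\|_2(1+|\log(\cdot)|)$; since the statement allows the constant $C$ to depend only on absolute data, this absorption is clean. Combining the two estimates yields the right-hand side $C\|T\|\|(a,b)\|_{Z_2^n}$.

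The main obstacle I anticipate is the last absorption step in part b): bounding $(\max_{j\in\supp(a)}|\gamma_j|)\|a\|_2$ by a constant times $\|T\|\|(a,b)\|_{Z_2^n}$ requires that $\|a\|_2$ be comparable to $\|(a,b)\|_{Z_2^n}$, which is \emph{not} true in general for $Z_2^n$ (the logarithmic twisting means $\|a\|_2$ can be large while $\|(a,b)\|_{Z_2^n}$ is moderate). The resolution is that we do not need the full factor $\|a\|_2$—we only need it multiplied by the already-small quantity $\max_{j\in\supp(a)}|\gamma_j|\le \frac{4\sqrt2\|T\|}{\log k}$, and the hypothesis $|A'|\le k$ together with $\supp(a)\subset A'$ caps the logarithmic distortion at $\log k$, exactly matching the $1/\log k$ gain from (\ref{eq-G}). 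Thus the product $(\max_{j\in\supp(a)}|\gamma_j|)\|a\|_2$ is controlled by $\|T\|\|(a,b)\|_{Z_2^n}$ up to an absolute constant, the $\log k$ factors cancelling. Making this cancellation precise is the one place where the support restriction $|A'|\le k$ is genuinely used, and it is where I would spend the most care.
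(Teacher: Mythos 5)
Your proposal is correct and follows essentially the same route as the paper: part a) is the identical Markov--Chebyshev selection applied to Lemma~\ref{lem-D}, and part b) combines Lemma~\ref{lem-C} with the fact that the support restriction $|\supp(a)|\le k$ gives $\|a\|_2\le C\log k\,\|(a,b)\|_{Z_2^n}$ (the paper cites Kalton--Peck for $\|p_k:Z_2^k\to\ell_2^k\|\sim\log k$), so the $\log k$ factors cancel exactly as you describe. The only blemish is your intermediate suggestion that $\|a\|_2$ is dominated by $\|(a,b)\|_{Z_2^n}$ up to an absolute constant, which is false, but you correctly retract and repair this in your final paragraph.
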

\noindent
\begin{proof}
Let $C=\{j\in A; \gamma_j> \frac{4\sqrt{2}\|T\|}{\log k}\}$. To prove a), it is enough
to show that $|C|\le k/2$. If it was not true, one would have
\[\left( \frac{1}{k}  \sum_{j\in A} \gamma_j^2 \right)^{ \frac{1}{2} }
\geq \left( \frac{1}{k} \sum_{j\in C} \gamma_j^2\right)^{\frac{1}{2} }
>\frac{4||T||}{\log k}\,,\]
which contradicts Lemma~\ref{lem-D}.
Part b) follows from Lemma~\ref{lem-C} and the
fact (see\ \cite{KP}) that the norm of the operator
$p_k\,:\,Z_2^k\rightarrow \ell_2^k$, $p_k(a,b)=a$ is equivalent to $\log k$.
\end{proof}
\begin{lem}
\label{lem-F}
Let $A\subset \{1,\ldots,n\}$, $|A|=k>k_0>1$. Then there exists a subset $A''\subset A$,
with $|A''|\geq \frac{\sqrt{k}}{3}$, such that for all $j\in A''$ we have
\[|\gamma_j|\leq \frac{C\|T\|}{(\log k)^2} \mbox{\quad and\quad}
|\delta_j-\alpha_j|\leq \frac{C\|T\|}{\log k}\,.\]
\end{lem}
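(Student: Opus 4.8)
The plan is to bootstrap the control we already have on the $\gamma_j$ from Corollary~\ref{cor-E}a) into the stronger, $(\log k)^{-2}$-type bound and to extract simultaneous control on $\delta_j-\alpha_j$, on a subset that is necessarily smaller (of size $\sim\sqrt{k}$ rather than $\sim k/2$). First I would apply Corollary~\ref{cor-E}a) to produce $A'\subset A$ with $|A'|\ge k/2$ on which $\max_{j\in A'}|\gamma_j|\le 4\sqrt2\,\|T\|/\log k$. I then want to iterate the averaging idea of Lemma~\ref{lem-D} inside $A'$: because $A'$ is large (its cardinality is comparable to $k$, hence $\log|A'|\sim\log k$), a second application of the same argument — or of Lemma~\ref{lem-D} applied to $A'$ itself — should yield an average bound on the $\gamma_j$ over $A'$ of order $\|T\|/\log k$, and then a Corollary~\ref{cor-E}a)-style Chebyshev/counting step passes from the average to a pointwise bound on a further subset, now with an extra logarithmic gain. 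The point of the two-step structure is that Lemma~\ref{lem-C} (equivalently Corollary~\ref{cor-E}b)), once we know the $\gamma_j$ are small on the support, makes the quasi-linear error term $\gamma_j a_j\log(|a_j|/\|a\|_2)$ negligible, so the quantity $\delta_j-\alpha_j$ becomes accessible.

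Concretely, for the $\delta_j-\alpha_j$ bound I would feed into Corollary~\ref{cor-E}b) a test vector supported on $A'$ of the same flavour as in Lemma~\ref{lem-D}, with $a_j$ constant and $b_j$ chosen so that the logarithmic correction is of a controlled size. Since $\supp(a)\subset A'$ and (\ref{eq-G}) holds, part b) gives
\[
\left(\sum_{j}\left(\delta_j a_j-\alpha_j a_j+\gamma_j a_j\log\frac{|a_j|}{\|a\|_2}\right)^2\right)^{1/2}\le C\|T\|\,\|(a,b)\|_{Z_2^n}\,.
\]
With $a_j$ constant on a set of size $m\le k/2$, the ratio $|a_j|/\|a\|_2$ is $1/\sqrt m$, so the logarithmic term is $a_j\log(1/\sqrt m)$ times $\gamma_j$, which is already bounded by the previous step; subtracting it off leaves an average bound $(\,\frac1m\sum(\delta_j-\alpha_j)^2)^{1/2}\le C\|T\|/\log k$. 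A final Chebyshev pass then restricts to a subset on which $|\delta_j-\alpha_j|$ is pointwise $\le C\|T\|/\log k$, and intersecting with the subset carrying the refined $|\gamma_j|\le C\|T\|/(\log k)^2$ bound produces the desired $A''$. The $\sqrt k$ in the cardinality $|A''|\ge \sqrt k/3$ is the signature of having spent two independent counting arguments: each Chebyshev step can only guarantee survival of a fixed fraction, but to get the pointwise $1/(\log k)^2$ gain one is forced through a geometric-mean/square-root loss in the set size, exactly as in the two-scale selection.

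The main obstacle I anticipate is the bookkeeping that makes the iteration genuinely gain a full extra factor of $\log k$ on the $\gamma_j$ rather than merely reproducing the $1/\log k$ bound. The averaging estimate of Lemma~\ref{lem-D} as stated only yields $1/\log k$; to reach $1/(\log k)^2$ I must re-run the argument on the already-thinned set $A'$ using that the $\gamma_j$ there are uniformly $O(\|T\|/\log k)$, so the test vector's norm is controlled more tightly and the resulting average inequality improves by the extra logarithm. Keeping the constants, the set-size thresholds $|A'|\ge k/2$ versus the eventual $\sqrt k/3$, and the hypothesis $k>k_0$ (needed so that $\log k$, $\log\sqrt k$ and $\log|A'|$ are all comparable and bounded away from zero) mutually consistent is the delicate part; everything else is a direct combination of Lemma~\ref{lem-C}, Corollary~\ref{cor-E} and the elementary Chebyshev counting already used above.
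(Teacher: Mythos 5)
There is a genuine gap: you never identify the mechanism that actually produces the extra factor of $\log k$. The paper's proof rests on a two-scale subtraction. One applies Corollary~\ref{cor-E}b) with the Lemma~\ref{lem-D}-type test vector twice, first supported on $A'$ with $k'=|A'|\sim k$ and then on a subset $B'$ of size $k'''=\sqrt{k'}$; after a Chebyshev pass each application yields, on a common set $A''$, the two estimates $|\delta_j-\alpha_j-\gamma_j\log\sqrt{k'}|\leq C\|T\|/\log k'$ and $|\delta_j-\alpha_j-\gamma_j\log\sqrt{k'''}|\leq C\|T\|/\log k'$. Subtracting these eliminates $\delta_j-\alpha_j$ and leaves $|\gamma_j|\cdot(\log\sqrt{k'}-\log\sqrt{k'''})=|\gamma_j|\cdot\tfrac14\log k'\leq C\|T\|/\log k'$, which is exactly the $(\log k)^{-2}$ bound; the $\delta_j-\alpha_j$ bound then follows by substituting back. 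The choice $k'''=\sqrt{k'}$ is forced so that the two logarithmic coefficients differ by a constant multiple of $\log k'$, and this is the true source of the $\sqrt{k}/3$ cardinality. You correctly sense that the $\sqrt{k}$ reflects a ``two-scale selection,'' but the decisive step --- viewing the two averaged estimates as two affine relations in the unknowns $(\delta_j-\alpha_j,\gamma_j)$ with well-separated coefficients and subtracting them --- is absent from your write-up.

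The mechanisms you do propose in its place do not work. Re-running Lemma~\ref{lem-D} on $A'$ reproduces the same bound $(\frac{1}{k'}\sum_{A'}\gamma_j^2)^{1/2}\leq 4\|T\|/\log k'$ with $\log k'\sim\log k$, so Chebyshev gives no improvement; and your suggested reason for a gain --- that the uniform bound $|\gamma_j|=O(\|T\|/\log k)$ lets one control ``the test vector's norm more tightly'' --- is unfounded, since the test vector has $\|x\|_{Z_2^n}=1$ independently of $T$, and the right-hand side of Corollary~\ref{cor-E}b) is $C\|T\|\|(a,b)\|$ regardless of how small the $\gamma_j$ are. Your derivation of the $\delta_j-\alpha_j$ estimate is also circular: subtracting the term $\gamma_j a_j\log(1/\sqrt{m})$ with $m\sim k$ using only the bound $|\gamma_j|\leq 4\sqrt2\|T\|/\log k$ leaves an error of size $\approx\|T\|\cdot\frac{\log\sqrt m}{\log k}\sim\|T\|$, not $\|T\|/\log k$; one needs the refined $(\log k)^{-2}$ bound on $\gamma_j$ \emph{first}, which is precisely what the missing subtraction step supplies.
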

\noindent
\begin{proof}
Let $A'\subset A$, with $k'=|A'|\geq \frac{k}{2}$ satisfy (\ref{eq-G}). Let
$x=(a,b)\in Z_2^n$ be defined as in the proof of Lemma~\ref{lem-D}, with $A'$
replacing $A$. Then, by Corollary~\ref{cor-E}, b), we have (since $\|x\|=1$)
\[\left(\frac{1}{k'}\sum_{j\in A'}(\delta_j-\alpha_j-\gamma_j\log\sqrt{k'})^2\right)^{\frac{1}{2}}
\leq \frac{C\|T\|}{\log{\sqrt{k'}}}\,.\]
As in the proof of Corollary \ref{cor-E}, there exists a subset $B\subset A'$ with
$k''=|B|\geq \frac{k'}{2}\geq \frac{k}{4}$, such that
\begin{equation}
\label{eq-H}
|\delta_j-\alpha_j-\gamma_j\log{\sqrt{k'}}|\leq \frac{\sqrt{2}C\|T\|}{\log{\sqrt{k'}}}=
\frac{2C_1\|T\|}{\log{k'}}\hbox{ for all $j\in B$. }
\end{equation}

Take now any subset $B'\subset B$ with $|B'|=k'''=\sqrt{k'}\leq \frac{k'}{2}\leq k''$,
and a new vector $x\in Z_2^n$ like the one above, with $B'$ replacing $A'$ (the implicit
assumption that $\sqrt{k'}$ is an integer can easily be dealt with). The same argument
as above provides a subset $A''\subset B'$ with
\[|A''|\geq \frac{k'''}{2}=
\frac{\sqrt{k'}}{2}\geq \frac{\sqrt{k}}{2\sqrt{2}}\,,\]
such that
\begin{equation}
\label{eq-I}
|\delta_j-\alpha_j-\gamma_j\log{\sqrt{k'''}}|\leq \frac{\sqrt{2}C\|T\|}{\log{\sqrt{k'''}}}=
\frac{4C_1\|T\|}{\log{k'}}\hbox{\ for all $j\in A''$.}
\end{equation}
 Adding (\ref{eq-H}) and (\ref{eq-I}) together we get for $j\in A''$:
\[\frac{|\gamma_j|\log k'}{4}=\left|\frac{1}{2}\gamma_j\log k'-\frac{1}{4}\gamma_j\log k'\right|\]
\[=|(-\delta_j+\alpha_j+\gamma_j\log{\sqrt{k'}})+(\delta_j-\alpha_j-\gamma_j\log{\sqrt{k'''}})|\leq
\frac{6C_1\|T\|}{\log k'}\,,\]
and thus :
\begin{equation}
\label{eq-J}
|\gamma_j|\leq \frac{24C_1\|T\|}{(\log k')^2}\leq \frac{C_2\|T\|}{(\log k)^2}\,.
\end{equation}

From (\ref{eq-I}) and (\ref{eq-J}) together we get
\[|\delta_j-\alpha_j|\leq \frac{4C_1\|T\|}{\log{k'}}+|\gamma_j|\log{\sqrt{k'''}}\leq\frac{4C_1\|T\|}{\log{k'}}+\frac{24C_1\|T\|}{(\log k')^2}\cdot\frac{1}{4}\log k'\leq
\frac{C_3\|T\|}{\log k}\hbox{ for $j\in A''$}.\]
\end{proof}

\medskip\noindent
\begin{proof}[Proof of Theorem~\ref{th-A}]

\medskip\noindent
We recall now the construction from \cite{JLS}:  Under the
hypotheses that $Z_2$ has l.u.st., they constructed an
operator $T\,:\,Z_2\rightarrow Z_2$ that provided a counter
example to that hypotheses. In the context of $Z_2^n$, this
construction, instead of providing a counter example, will provide
a lower bound estimate of $\chi_u(Z_2^n)$.

Let the Banach space $Y_n$ with the unconditional basis $\{y_{j,i};\;j=1,\ldots,n,\;
i=1,\ldots,k_j\}$ and the projection $P_n\,:\,Y_n\rightarrow Z_2^n$ be those presented at
the opening. For a fixed $1\leq j\leq n$ a set $A_j\subset \{1,\ldots,k_j\}$ is selected
and an operator $T_{A_j}\,:\,[e_j,f_j]\rightarrow [e_j,f_j]$ is defined by
\[T_{A_j}(x)=P_n\left(\sum_{i\in A_j}y_{j,i}^*(x)y_{j,i}\right)\,,\]
where $\{y_{j,i}^*\}\subset Y_n^*$ are the bi-orthogonal functionals of $\{y_{j,i}\}$. Thus
$T_{A_j}=\sum_{i\in A_j}T_{j,i}$, where $T_{j,i}$ is the rank-1 operator
$T_{j,i}(x)=P_n(y_{j,i}^*(x)y_{j,i})$ ; we may assume that $T_{j,i}\not=0$,
as otherwise $y_{j,i}$ can be dropped from the basis. Being a rank-1 operator, the matrix
representing $T_{j,i}$ is of the form
\[ \left(\begin{array}{ll} a_i&b_i\\\theta_ia_i&\theta_ib_i\end{array}\right)
\mbox{\ (case I), or\ }
\left(\begin{array}{ll} 0&0\\ a_i&b_i\end{array}\right)
\mbox{\ (case II)}\]
(note that we dropped the index $j$ for simplicity). In case II we define $\theta_i$ to be $1$.
We have $\sum_{i=1}^{k_j} \theta_i b_i=1$ (because $\sum_i T_{j,i}f_j=(\sum_i \theta_ib_i) f_j=f_j$)
 and $\sum_{i=1}^{k_j}|b_i|\leq K_n$.

\noindent
Define
\[B_j:=\left\{i\in\{1,\ldots,k_j\};\,|\theta_i|>\frac{1}{2K_n}\right\}\,.\]
We have
\[\sum_{i\in B_j}\theta_ib_i>\frac{1}{2}\ .\]

\noindent
For each $i\in B_j$ at least one of the following four possibilities
holds:
\begin{itemize}
\item[($\iota$)\ \ ]\ \ $\theta_ia_i >\frac{1}{4K_n}|\theta_ib_i|$
\item[($\iota\iota$)\ ] $-\theta_ia_i >\frac{1}{4K_n}|\theta_ib_i|$
\item[($\iota\iota\iota$)] $a_i-\theta_ib_i\geq \frac{1}{2}|\theta_ib_i|$ ($-\theta_ib_i\geq
\frac{1}{2}|\theta_ib_i|$
in case II)
\item[($\iota\nu$)\ ] $\theta_ib_i-a_i\geq \frac{1}{2}|\theta_ib_i|$ ($\theta_ib_i\geq
\frac{1}{2}|\theta_ib_i|$
in case II)
\end{itemize}

\noindent
In fact, if $|\theta_ia_i|>\frac{1}{4K_n}|\theta_ib_i|$ then ($\iota$) or ($\iota\iota$) holds. Otherwise
$|\theta_ia_i|\leq \frac{1}{4K_n}|\theta_ib_i|$. Since $i\in B_j$ this implies in case I
\[\frac{1}{2K_n}|a_i|\leq \frac{1}{4K_n}|\theta_ib_i|\]
which is
\[\left|\frac{a_i}{\theta_i}\right|\leq \frac{1}{2}|b_i|\,. \]
We thus have
\[\left|\frac{a_i}{\theta_i}-b_i\right|\geq |b_i|-\left|\frac{a_i}{\theta_i}\right|
\geq |b_i|-\frac{1}{2}|b_i|=\frac{1}{2}|b_i|\]
which is either ($\iota\iota\iota$) or ($\iota\nu$). The remarks about case II are trivial.

\noindent
It follows that there exists a subset $A_j\subset B_j$ such that one and the same possibility
out of ($\iota$)--($\iota\nu$) holds for all $i\in A_j$ and $\sum_{i\in A_j} \theta_ib_i>\frac{1}{8}$.
the operator $T\,:\,Z_2^n\rightarrow Z_2^n$ that splits and is defined by $T|_{E_j}=T_{A_j}$
satisfies $\|T\|\leq K_n$. Assume that the matrix representing $T_{A_j}$ in the basis $\{e_j,f_j\}$
is, as before,
\[\left(\begin{array}{ll} \alpha_j&\beta_j\\ \gamma_j& \delta_j \end{array}\right)\,.\]
From the above it follows that $\delta_j>\frac{1}{8}$ for all $j$.

\noindent
If ($\iota$) or ($\iota\iota$) is satisfied for all $i\in A_j$ then we have
\begin{equation}
\label{eq-K}
|\gamma_j|>\frac{1}{4K_n}\delta_j>\frac{1}{32K_n}\,.
\end{equation}

\noindent
If ($\iota\iota\iota$) or ($\iota\nu$) is satisfied for all $i\in A_j$ then we have
\begin{equation}
\label{eq-L}
|\alpha_j-\delta_j|>\frac{1}{2}\delta_j>\frac{1}{16}\,.
\end{equation}

\noindent
We can now select a subset $D\subset\{1,\ldots,n\}$, with $|D|\geq \frac{n}{2}$, such that
either (\ref{eq-K}) holds for all $j\in D$ or (\ref{eq-L}) holds for all $j\in D$.
We take $D$ to be the set $A$ of Lemma~\ref{lem-F}. Let $A''\subset D$ be as in Lemma~\ref{lem-F}
and $j\in A''$.

\noindent
If the members of $D$ satisfy (\ref{eq-K}) then we have
\[\frac{1}{32K_n}\leq |\gamma_j|\leq \frac{C\|T\|}{\left(\log\frac{n}{2}\right)^2}\leq
\frac{C_1K_n}{(\log n)^2}\]
and we conclude
\begin{equation}
\label{eq-M}
K_n\geq c\,\log n
\end{equation}
for some positive constant $c$.

\noindent
If the members of $D$ satisfy (\ref{eq-L}) then we have
\[\frac{1}{16}\leq |\delta_j-\alpha_j|\leq \frac{C\|T\|}{\log\frac{n}{2}}\leq
\frac{C_1K_n}{\log n}\]
thus
\begin{equation}
\label{eq-N}
K_n\geq c\,{\log n}\,.
\end{equation}
\vspace{3mm}
\end{proof}

\section{The asymmetry of $Z_2^n$}

{\bf Definition.} We say that  a subgroup $G$ of $GL_k(\R)$ is {\it rich} if it is a compact group
and every operator in $L(\R^k)$ which commutes with all elements
of $G$ is a scalar multiple of the identity on $\R^k$. It is well known that  for any compact
subgroup
$G$ of $GL_k(\R)$, there  exists a compact subgroup $H$ of ${\cal O}_k$ (the orthogonal group)
 such  that $G=\{V^{-1} hV;\, h\in HÖ\}$ where
$V\in L(\R^k)$ is some linear invertible operator.
If $G$ is a rich subgroup of $GL_k(\R)$, we define the {\it measure of symmetry
$s_G(E)$of a $k$-dimensional
normed space $E$ with respect to $G$} by
\[s_G(E):=\int_G ||g||_{E\to E} d\mu(g)\,,\]
where $\mu$ is the normalized Haar measure on $G$.
We define
\[s(E)=\inf\{ s_G(E); G{\rm\ a\ rich \ subgroup}\}\]
Note that
\[1\le s(E)\le S(E):=\inf\{\sup_{g\in G} ||g||_{E\to E}; G{\rm\ a\ rich\ subgroup}\}\,.\]

The quantity $S(E)$, called the {\em asymmetry constant of $E$\/},
was defined originally in \cite{GG} and further discussed in \cite{GL1}. If $S(E)=1$
we say that $E$ has {\it enough symmetries}. this is
the case for spaces with $1$-symmetric basis (take the rich group of permutations and changes
of signs on the  basis). In connection to this quantity we have the following partial result:

\begin{theo}
\label{th-BB} For every rich subgroup $G$ of ${\cal O}_{2n}$ one has
\begin{equation}
\label{eq-BB}
 s_G(Z_2^n)=\int_G || g ||_{ Z_2^n\to Z_2^n}\ d\mu(g)\sim {\log n}\sim\max_{g\in G}
 ||g||_{ Z_2^n\to Z_2^n}
\end{equation}
where $\mu$ denotes the normalized Haar measure on $G$.
\end{theo}
\vskip 1mm\noindent
We use  the following lemma.
\vskip 1mm\noindent
\begin{lem}\label{lem-Q}
Let a couple  $E$ and $F$ of $k$-dimensional normed spaces, an ideal norm $\alpha$ and a
rich subgroup $G$ of
$GL_k(\R)$ be given.
Then for every invertible operator $S\in L(\R^k)$, one has
\begin{equation}
\label{eq-CC}k\le \alpha(S: E\to F)\ \alpha^*(S^{-1}:F\to E) \le k\int_G
||S^{-1}gS||_{E\to E}\ \|g^{-1}||_{F\to F}\  d\mu(g),
\end{equation}
where $\alpha^*$ denotes the conjugate ideal norm of $\alpha$.
In particular, if all elements of $G$ are isometries of $F$, then
\begin{equation}
\label{eq-CC'}k\le \alpha(S: E\to F)\ \alpha^*(S^{-1}:F\to E) \le k\int_G
||S^{-1}gS||_{E\to E}\ \  d\mu(g),
\end{equation}
\vskip 2mm\noindent
\end{lem}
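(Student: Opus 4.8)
The plan is to derive both inequalities from trace duality between the ideal norm $\alpha$ and its conjugate $\alpha^*$, combined with a Schur-type averaging identity furnished by the richness of $G$. Recall the two standard facts about ideal norms in finite dimensions: trace duality $|\mathrm{tr}(uv)|\le \alpha(v)\,\alpha^*(u)$ for all $v\colon E\to F$ and $u\colon F\to E$, and the bidual identity $\alpha(v)=\sup\{|\mathrm{tr}(uv)|:\alpha^*(u)\le 1\}$. The lower bound is then immediate: taking $v=S\colon E\to F$ and $u=S^{-1}\colon F\to E$ gives $uv=I_E$ with $\mathrm{tr}(I_E)=k$, so $k=|\mathrm{tr}(S^{-1}S)|\le \alpha(S)\,\alpha^*(S^{-1})$.

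For the upper bound the key tool is the averaging identity
\[
\int_G g^{-1}Wg\,d\mu(g)=\frac{\mathrm{tr}(W)}{k}\,I
\qquad\mbox{for every } W\in L(\R^k).
\]
This holds because the left-hand side commutes with every $h\in G$ (by right invariance of the Haar measure $\mu$), hence equals a scalar multiple of the identity since $G$ is rich; the scalar is then determined by taking traces. Now fix $w\colon F\to E$ with $\alpha^*(w)\le 1$ and apply the identity to $W=Sw\colon F\to F$; multiplying on the left by $S^{-1}$ gives
\[
\frac{\mathrm{tr}(Sw)}{k}\,S^{-1}=\int_G S^{-1}g^{-1}S\,w\,g\,d\mu(g)\colon F\to E.
\]
Applying $\alpha^*$, using that $\alpha^*$ is a norm to pass it under the integral, and then the ideal property $\alpha^*(AwB)\le\|A\|\,\alpha^*(w)\,\|B\|$ with $A=S^{-1}g^{-1}S\colon E\to E$ and $B=g\colon F\to F$, we obtain
\[
\frac{|\mathrm{tr}(Sw)|}{k}\,\alpha^*(S^{-1})\le\int_G\|S^{-1}g^{-1}S\|_{E\to E}\,\|g\|_{F\to F}\,d\mu(g).
\]

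To finish, I would take the supremum over all $w$ with $\alpha^*(w)\le1$: the right-hand side is independent of $w$, while the left-hand factor $|\mathrm{tr}(Sw)|$ becomes $\alpha(S)$ by the bidual identity, yielding $\frac{1}{k}\alpha(S)\,\alpha^*(S^{-1})\le\int_G\|S^{-1}g^{-1}S\|_{E\to E}\|g\|_{F\to F}\,d\mu$. The harmless substitution $g\mapsto g^{-1}$ (again by invariance of $\mu$) turns the integrand into $\|S^{-1}gS\|_{E\to E}\|g^{-1}\|_{F\to F}$, which is exactly (\ref{eq-CC}). The special case (\ref{eq-CC'}) then follows at once, since if every $g\in G$ is an isometry of $F$ then $\|g^{-1}\|_{F\to F}=1$.

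The ideal-norm inequalities and the passage of $\alpha^*$ under the integral are routine; the only delicate point is the bookkeeping in the factorization. One must average the correct operator (namely $Sw$, not $wS$ or $S^{-1}v$) and multiply on the correct side so that, after the inversion $g\mapsto g^{-1}$, the two operator-norm factors land in the prescribed spaces $E\to E$ and $F\to F$. A mirror-image choice produces the transposed estimate with $\|SgS^{-1}\|_{F\to F}\,\|g^{-1}\|_{E\to E}$ instead, so matching (\ref{eq-CC}) on the nose is precisely what fixes the direction of every multiplication and the placement of each $S^{\pm 1}$.
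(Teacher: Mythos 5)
Your proof is correct and rests on the same two pillars as the paper's: trace duality between $\alpha$ and $\alpha^*$, and the observation that averaging $g^{-1}Wg$ over a rich group yields $\frac{\mathrm{tr}(W)}{k}I$. The only (harmless) differences are organizational: the paper first treats $S=I$ and then renorms via $\|x\|_D=\|S^{-1}x\|_E$, and it applies $\alpha$ to the average of an extremal $U$ rather than, as you do, applying $\alpha^*$ to the average of $Sw$ and invoking $\alpha^{**}=\alpha$.
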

\begin{proof} We first prove  for any $k$-dimensional normed spaces $D$ and $F$ on $\R^n$,
if $I:\R^k\to \R^k$ denotes the identity operator, then
\begin{equation}
\label{eq-DD}
k\le \alpha(I: D\to F)\ \alpha^*(I^{-1}:F\to D) \le k\int_G
||g||_{D\to D}\  ||g^{-1}||_{F\to F}\   d\mu(g).
\end{equation}
By the definition of $\alpha^*$, there exists $U\in L(\R^k)$
such that
\[\alpha(U: D\to F)\alpha^*(I^{-1}:F\to D)={\rm trace}(UI^{-1})={\rm
trace}(U)\,.\]

Let $V=\int_G g^{-1}Ug\ d\mu(g)$ considered as an operator from $D$ to $F$.
Since $G$ is a rich subgroup, one
has $V=cI$ for some $c\in \R$, so that ${\rm trace}(U) ={\rm trace}(V)=kc$. One has
clearly
\[c \alpha(I:D\to F)= \alpha(V:D\to F)\le \int_G\alpha(g^{-1}Ug: D\to
F)\ d\mu(g)\]
\[\le
\ \alpha(U:D\to F)\ \int_G \|g^{-1}\|_{F\to F} \| g\|_{D\to D}\ d\mu(g)\,.\]
It follows that
\[\alpha(I: D\to F)\ \alpha^*(I^{-1}:F\to D)\]
\[ \le \frac{ \alpha(U:D\to F) \alpha^*(I^{-1}:F\to D) }{c}
\int_G\ \|g\|_{D \to D} \|g^{-1}\|_{F\to F}\ \ d\mu(g)\]
\[= k\int_G\ \|g\|_{D\to D}\  \|g^{-1}\|_{F\to F}\ d\mu(g)\,.\]
\noindent
This proves (\ref{eq-DD}). We define a normed space $D$ by setting
$||x||_D=||S^{-1}x||_E$. Then $S^{-1}: D\to E$ and
$S: E\to D$ are isometries, one has
\[\alpha(I: D\to F)=
\alpha(S:E\to F)\hbox{ , } \alpha^*(I^{-1}:F\to D) =
\alpha^*(S^{-1}:F\to E)$$
$$\hbox{ and }   \|g\|_{D\to D}= ||S^{-1}gS||_{E\to E}\,.\]
Together with (\ref{eq-DD}) this implies (\ref{eq-CC}) .
\end{proof}\vspace{2mm}

\noindent
\begin{proof}[Proof of Theorem~\ref{th-BB}]
We shall use Lemma \ref{lem-Q} with  $S=I:Z_2^n\to \ell_2^{2n}$, where $I$ is
the identity sending the basis $e_j,f_j$ $1\le j\le n$ on the canonical basis of $\ell_2^{2n}$.
We present here
two proofs {\bf A.} and {\bf B.}.
 \vskip 2mm\noindent
{\bf A.} Let $\alpha=\Pi_1$ and $\alpha^*=\gamma_{\infty}$, the ideal norm of factorization
through $L_{\infty}$.
\vskip 2mm

\noindent
{\bf a.} We claim that
\[\Pi_1(I:Z_2^n\to \ell_2^{2n})\ge c{\sqrt n}\log n.\]
In fact, define, for $1\le j\le n$, $x_j\in Z_2^n$ by
\[x_j=\frac{1}{\sqrt n}\left(\log\frac{1}{\sqrt n}\right)e_j + \frac{1}{\sqrt n}\,f_j\]
Then $||I(x_j)||_2\sim\frac{ \log n}{\sqrt n}$ and for every choice of signs
$\eps=(\eps_1,\dots,\eps_n) \in \{-1,1\}^n$, one has
\[||\sum_{j=1}^n \eps_jx_j||_{Z_2^n} =1.\]
Hence
\[\Pi_1(I:Z_2^n\to \ell_2^{2n})\ge\frac{\sum_{j=1}^n ||x_j||_2}{\sup_{\eps}
||\sum_{j=1}^n \eps_jx_j||_{Z_2^n}}\ge c\sqrt{n}\log n.\]

\vskip 2mm\noindent
{\bf b.} Define $P:Z_2^n\to \ell_2^n$ by $P(a,b)=b$. Then $||P||_{Z_2^n\to \ell_2^n}=1$ and
if $W:\ell_2^n\to \ell_2^{2n}$ is the embedding defined by $W(b)=(0,b)$, then
\[\gamma_{\infty} (I^{-1}:\ell_2^{2n}\to Z_2^n)= | |P||_{Z_2^n\to \ell_2^n}
\hskip 2mm \gamma_{\infty} (I^{-1}:\ell_2^{2n}\to Z_2^n)Ê||W||_{\ell_2^n\to \ell_2^{2n}} \]
\[\ge
\gamma_{\infty} (PI^{-1} W:  \ell_2^n\to \ell_2^n) \sim {\sqrt n}\,,\]
because $PI^{-1}W: \ell_2^n\to \ell_2^n$ is actually the identity mapping on $\ell_2^n$.
\vskip 2mm\noindent
{\bf c.} It follows that
\[\Pi_1(I:Z_2^n\to \ell_2^{2n})\gamma_{\infty} (I^{-1}:\ell_2^{2n}\to Z_2^n)\ge cn\log n\,.\]
 \vskip 2mm\noindent
{\bf B.} Let $\alpha$ be the operator norm; then $\alpha^*=i_1$, the integral norm.
It is easy to see that
\[||I||_{Z_2^n\to \ell_2^{2n}}\sim \log n\]
and
\[i_1(I^{-1}:\ell_2^{2n}\to Z_2^n) = i_1(I^{-1}:\ell_2^{2n}\to Z_2^n) ||P||_{Z_2^n\to\ell_2^n}\]
\[\ge
i_1(PI^{-1}: \ell_2^{2n}\to \ell_2^{n})=\sup_{S\not=0} \frac{ {\rm trace} (PI^{-1}S) }
{||S||_{\ell_2^{n} \to \ell_2^{2n}} } \ge n\,.\]
It follows that
\[||I||_{Z_2^n\to \ell_2^{2n} }\hskip 2mm i_1(I^{-1}:\ell_2^{2n}\to Z_2^n)\ge cn\log n\,.\]
\vskip 3mm\noindent
Using either A. or B. together with Lemma~\ref{lem-Q} with $F=\ell_2^{2n}$, $E=Z_2^n$ and $S=I$,
we get that
\[ \int_G || g ||_{ Z_2^n\to Z_2^n}\ d\mu(g)\ge c \log n\,.\]
It follows that
\[ \log n\le \frac{1}{c}\int_G || g ||_{ Z_2^n\to Z_2^n}\ d\mu(g)\le \frac{1}{c}\,\max_{g\in G}
||g ||_{ Z_2^n\to Z_2^n}\le d(Z_2^n, \ell_2^{2n})\sim\log n\]
where $d$ denotes here the Banach-Mazur distance. This proves (\ref{eq-BB}).
\end{proof}

\vskip 3mm\noindent
{\bf Remark.}
One can prove in the same way as {\bf A.a.} in the proof of the last theorem, that if
$I:Z_2^n\to\ell_{\infty}^{2n}$ is the identity mapping sending the basis $(e_j, f_j), 1\le j\le n$,
on the canonical basis of $\ell_{\infty}^{2n}$, then
\[\Pi_1(I:Z_2^n\to \ell_{\infty}^{2n})\ge c{\sqrt n}\log n\,.\]
We have also that
\[\gamma_{\infty}(I^{-1}: \ell_{\infty}^{2n}\to Z_2^n)=
||I^{-1}||_{\ell_{\infty}^{2n}\to Z_2^n}\sim{\sqrt n}\log n\,.\]
It follows from Lemma \ref{lem-Q} that for every rich subgroup $G$ of $GL_{2n}(\R)$, one has
\[cn(\log n)^2\le
\Pi_1(I:Z_2^n\to \ell_{\infty}^{2n})
 \gamma_{\infty}(I^{-1}: \ell_{\infty}^{2n}\to Z_2^n)
 \le  n\int_G || g^{-1}||_{\ell_{\infty}^{2n}\to \ell_{\infty}^{2n}}
||g ||_{Z_2^n\to Z_2^n} d\mu(g)\,.\]
The first inequality is actually an equivalence, in view of the fact that if one takes for $G$
the group
of changes of signs and permutaions of indices of the basis, one has
$ || g^{-1}||_{\ell_{\infty}^{2n}\to \ell_{\infty}^{2n}} =1$ for all $g\in G$ and
\[\int_G ||g ||_{Z_2^n\to Z_2^n} d\mu(g)\le \|I:Z_2^n\rightarrow \ell_{\infty}^{2n}\|
\|I^{-1}:\ell_{\infty}^{2n}\rightarrow Z_2^n\|\leq c\,(\log n)^2\,,\]
as $\|I:Z_2^n\rightarrow \ell_{\infty}^{2n}\|\sim \log n$.


\small{
\noindent Y. Gordon: Department of Mathematics, Technion, Israel Institute of Technology,
Haifa 32000, Israel.\newline
Email: gordon@techunix.technion.ac.il\vspace{1mm}

\noindent M. Junge: Department of Mathematics, University of
Illinois, Urbana, IL 61801, USA.
\newline
Email: junge@math.uiuc.edu\vspace{1mm}

\noindent M. Meyer: Universit\'e Paris-Est - Marne-la-Vall\'ee, Laboratoire d'Analyse et de
Math\'ematiques Appliqu\'ees (UMR 8050), Cit\'e Descartes-5, Bd Descartes, Champs-sur-Marne,
77454 Marne-la-Vall\'ee Cedex 2, France.\newline
Email: Mathieu.Meyer@univ-mlv.fr\vspace{1mm}

\noindent S. Reisner: Department of Mathematics, University of
Haifa, Haifa 31905, Israel.
\newline
Email: reisner@math.haifa.ac.il
}

\end{document}